\documentclass{amsart}
\usepackage{graphicx}
\usepackage{url}
\usepackage{amssymb}
\usepackage[dvipdfmx]{hyperref}
\usepackage{amssymb}
\usepackage{url}
\usepackage{cite}

\setcounter{MaxMatrixCols}{10}

\newtheorem{theorem}{Theorem}[section]
\theoremstyle{plain}

\newtheorem{corollary}{Corollary}[section]

\newtheorem{lemma}{Lemma}[section]

\newtheorem{problem}{Problem}

\numberwithin{equation}{section}
\hyphenation{Ko-ku-shi-ka-n Fi-gue-ro-a I-chi-shi-ma Mun-ta-ner Ba-tle Cen-te-no}

\begin{document}
\title[ On the super edge-magicness of graphs with a specific degree sequence]{ On the super edge-magicness of graphs with a specific degree sequence}
\author{ Rikio Ichishima}
\address{Department of Sport and Physical Education, Faculty of Physical
Education, Kokushikan University, 7-3-1 Nagayama, Tama-shi, Tokyo 206-8515,
Japan}
\email{ichishim@kokushikan.ac.jp}
\author{Francesc A. Muntaner-Batle}
\address{Graph Theory and Applications Research Group, School of Electrical
Engineering and Computer Science, Faculty of Engineering and Built
Environment, The University of Newcastle, NSW 2308 Australia }
\email{famb1es@yahoo.es}
\subjclass{05C07, 05C78}
\keywords{(super) edge-magic graph, (super) edge-magic labeling, vertex degree, degree sequence, graph labeling}

\begin{abstract}
A graph $G$ is said to be super edge-magic if there exists a bijective function $f:V\left(G\right) \cup E\left(G\right)\rightarrow \left\{1, 2, \ldots , \left\vert V\left( G\right) \right\vert +\left\vert E\left( G\right) \right\vert \right\}$ such that $f\left(V \left(G\right)\right) =\left\{1, 2, \ldots , \left\vert V\left( G\right) \right\vert \right\}$ and $f\left(u\right) + f\left(v\right) + f\left(uv\right)$ is a constant for each $uv\in E\left( G\right) $. 
In this paper, we study the super edge-magicness of graphs of order $n$ with degree sequence $s:4, 2, 2, \ldots, 2$. 
We also investigate the super edge-magic properties of certain families of graphs.
This leads us to propose some open problems.
\end{abstract}

\date{Aug 13, 2023}
\maketitle

\section{Introduction}

Only graphs without loops or multiple edges will be considered in this
paper. Undefined graph theoretical notation and terminology can be found in 
\cite{CL} or \cite{West}. 
The \emph{vertex set} of a graph $G$ is denoted by $V \left(G\right)$, 
while the \emph{edge set} of $G$ is denoted by $E\left (G\right)$.

Motivated by the notion of magic valuation of graphs introduced by Kotzig and Rosa \cite{KR}, 
Enomoto, Llad\'{o}, Nakamigawa, and Ringel \cite{ELNR} introduced the concept of super edge-magic graphs.
A graph $G$ is said to be \emph{super edge-magic} if there exists a bijective function 
$f:V\left(G\right) \cup E\left(G\right)\rightarrow \left\{1, 2, \ldots , \left\vert V\left( G\right) \right\vert +\left\vert E\left( G\right) \right\vert \right\}$ 
such that $f\left(V \left(G\right)\right) =\left\{1, 2, \ldots , \left\vert V\left( G\right) \right\vert \right\}$ and $f\left(u\right) + f\left(v\right) + f\left(uv\right)$ is a constant (called the \emph{valence} $\text{val}\left(f\right)$ of $f$) for each $uv\in E\left( G\right) $. 
Such a function is called a \emph{super edge-magic labeling}.

The following lemma found in \cite{FIM} is a very useful characterization of super edge-magic graphs.

\begin{lemma}
\label{trivial}
A graph $G$ is super edge-magic if and only if
there exists a bijective function $f:V\left( G\right) \rightarrow \left\{1, 2, \ldots , \left\vert V\left( G\right) \right\vert \right\} $ such that the set 
\begin{equation*}
S=\left\{ f\left( u\right) +f\left( v\right) : uv\in E\left( G\right) \right\}
\end{equation*}
consists of $\left\vert E\left( G\right) \right\vert $ consecutive integers. In such a case, $f$ extends to a super
edge-magic labeling of $G$ with valence $k=\left\vert V\left( G\right) \right\vert +\left\vert E\left( G\right) \right\vert +s$, where $s=\min
\left( S\right) $ and 
\begin{equation*}
S=\left\{ k-\left( \left\vert V\left( G\right) \right\vert +1\right), k-\left( \left\vert V\left( G\right) \right\vert +2\right), \ldots , k-\left( \left\vert V\left( G\right) \right\vert +\left\vert E\left( G\right) \right\vert \right) \right\} \text{.}
\end{equation*}
\end{lemma}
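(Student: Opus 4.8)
The plan is to prove both implications by a direct translation between a labeling of $G$ and the labeling of $G$ obtained by forgetting the edge labels. Throughout, write $p = \left\vert V\left(G\right)\right\vert$ and $q = \left\vert E\left(G\right)\right\vert$.

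For the ``only if'' direction, I would start from a super edge-magic labeling $g$ of $G$ with valence $k$. Since $g\left(V\left(G\right)\right) = \left\{1,\ldots,p\right\}$ and $g$ is a bijection, the restriction of $g$ to $E\left(G\right)$ is a bijection onto $\left\{p+1,\ldots,p+q\right\}$. Let $f$ be the restriction of $g$ to $V\left(G\right)$. The magic condition rearranges to $f\left(u\right)+f\left(v\right) = k - g\left(uv\right)$ for every edge $uv$; as $g\left(uv\right)$ runs bijectively through $\left\{p+1,\ldots,p+q\right\}$, the quantity $f\left(u\right)+f\left(v\right)$ runs through the $q$ consecutive integers $k-p-q, \ldots, k-p-1$. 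Hence $S$ is a set of $q$ consecutive integers, $s := \min\left(S\right) = k-p-q$, and so $k = p+q+s$.

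For the ``if'' direction, I would carry out the reverse construction. Given a bijection $f:V\left(G\right)\to\left\{1,\ldots,p\right\}$ for which $S$ consists of $q$ consecutive integers, say $S = \left\{s,s+1,\ldots,s+q-1\right\}$ with $s=\min\left(S\right)$, I first note that since $\left\vert S\right\vert = q = \left\vert E\left(G\right)\right\vert$ the assignment $uv\mapsto f\left(u\right)+f\left(v\right)$ is in fact a bijection from $E\left(G\right)$ onto $S$. Then set $k = p+q+s$ and define $g\left(uv\right) = k - f\left(u\right)-f\left(v\right)$ on each edge and $g\left(x\right)=f\left(x\right)$ on each vertex. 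One checks that $g$ maps $E\left(G\right)$ bijectively onto $\left\{k-s-q+1,\ldots,k-s\right\} = \left\{p+1,\ldots,p+q\right\}$, so $g$ is a bijection $V\left(G\right)\cup E\left(G\right)\to\left\{1,\ldots,p+q\right\}$ respecting the vertex block, and $f\left(u\right)+f\left(v\right)+g\left(uv\right) = k$ by construction; hence $g$ is a super edge-magic labeling with valence $k$. Substituting $s = k-p-q$ into $S = \left\{s,\ldots,s+q-1\right\}$ rewrites it in the descending form $\left\{k-\left(p+1\right),k-\left(p+2\right),\ldots,k-\left(p+q\right)\right\}$ asserted in the lemma.

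I do not anticipate a genuine obstacle, since the whole argument is bookkeeping. The single point deserving care is the counting step in the ``if'' direction: because $S$ has exactly $q$ elements and $G$ has exactly $q$ edges, the edge-sum map is forced to be injective, and this is precisely what makes the induced edge labels $g\left(uv\right)$ pairwise distinct and exactly fill the top block $\left\{p+1,\ldots,p+q\right\}$.
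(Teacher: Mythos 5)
Your proof is correct: both directions are the standard translation between a vertex labeling whose edge-sums form $\left\vert E\left(G\right)\right\vert$ consecutive integers and a full super edge-magic labeling, and the counting step you flag (that $\left\vert S\right\vert = \left\vert E\left(G\right)\right\vert$ forces the edge-sum map to be a bijection, so the induced edge labels exactly fill $\left\{\left\vert V\left(G\right)\right\vert+1,\ldots,\left\vert V\left(G\right)\right\vert+\left\vert E\left(G\right)\right\vert\right\}$) is indeed the only point needing care. The paper does not reprove this lemma but quotes it from \cite{FIM}, where essentially the same bookkeeping argument is given, so your approach matches the source.
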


It is worth to mention that Acharya and Hegde \cite{AH} introduced the concept of strongly indexable graph that turns out to be equivalent to the concept of super edge-magic graph (see \cite{HS}). 
The study of super edge-magic labelings of graphs has proven to be crucial in the last two decades, since many relations with other types of labelings have been found (see \cite{ FIM}), and relations with other concepts such as Skolem and Langford sequences (see \cite{LM1}), and dual shuffle primes and Jacobsthal sequences (see \cite{LMP} and \cite{LMR3}). 

For a thorough study of graph labeling problems, see the extensive survey by Gallian \cite{Gallian}. 
For more information on super edge-magic graphs and related topics, see the books by Ba\v{c}a and Miller\cite{BM}, 
Chartrand, Egan, and Zhang \cite{CEZ}, L\'{o}pez and Muntaner-Batle\cite{LM2}, and Marr and Wallis \cite{MW}. 

One of the main goals of this paper is to study the super edge-magic properties of graphs of order $n$ with degree sequence $s:4, 2, 2, \ldots, 2$.
In particular, we would like to stress the family of graphs that we define next that will be a point of study in the subsequent pages of this paper. 
For integers $m \geq 3$ and $n \geq 3$, let $C(m,n)$ denote the graph consisting of two cycles $C_{m}$ and $C_{n}$ of orders $m$ and $n$ in such a way that the two cycles share exactly one vertex in common.
Observe that this family of graphs is the simplest case of the family of graphs for which all blocks are cycles. 
In fact, another goal for this paper is to spread around the interest about the super edge-magic properties of graphs of this type.

Next, we proceed with the following lemma found in \cite{FIM} that will be useful to obtain certain results in this paper.
For this purpose, the degree of a vertex $v$ is denoted by $\deg_{G} (v)$ or simply by $\deg (v)$ if the graph $G$ under discussion is clear.

\begin{lemma}
\label{not_SEM}
Let $G$ be a graph such that $\deg (v)$ is even for all $v \in V\left(G\right)$ and $ \left\vert E\left( G\right) \right\vert \equiv 2\pmod{4}$. 
Then $G$ is not super edge-magic.
\end{lemma}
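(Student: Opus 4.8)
The plan is to argue by contradiction, combining the characterization in Lemma \ref{trivial} with a double-counting parity argument. Suppose, to the contrary, that $G$ is super edge-magic, and write $q=\left\vert E\left(G\right)\right\vert$. By Lemma \ref{trivial} there is a bijection $f:V\left(G\right)\rightarrow \left\{1,2,\ldots ,\left\vert V\left(G\right)\right\vert\right\}$ for which $S=\left\{f\left(u\right)+f\left(v\right):uv\in E\left(G\right)\right\}$ consists of $q$ consecutive integers, say $S=\left\{s,s+1,\ldots ,s+q-1\right\}$ with $s=\min\left(S\right)$.

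First I would evaluate $\sigma:=\sum_{uv\in E\left(G\right)}\left(f\left(u\right)+f\left(v\right)\right)$ in two ways. Grouping the contributions vertex by vertex, each label $f\left(v\right)$ is counted exactly $\deg\left(v\right)$ times, so $\sigma=\sum_{v\in V\left(G\right)}\deg\left(v\right)f\left(v\right)$; since every $\deg\left(v\right)$ is even, each summand is even and therefore $\sigma$ is even, \emph{independently of the particular labeling $f$}. On the other hand, summing the consecutive integers in $S$ gives $\sigma=\sum_{i=0}^{q-1}\left(s+i\right)=qs+\binom{q}{2}$.

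Next I would compute the parity of $qs+\binom{q}{2}$ under the hypothesis $q\equiv 2\pmod 4$. Writing $q=4k+2$, the term $qs=2\left(2k+1\right)s$ is even, while $\binom{q}{2}=\frac{q\left(q-1\right)}{2}=\left(2k+1\right)\left(4k+1\right)$ is a product of two odd integers and hence odd. Thus $\sigma$ is odd, contradicting the evenness obtained above. Consequently no such bijection $f$ exists, and by Lemma \ref{trivial} the graph $G$ is not super edge-magic.

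There is no genuine obstacle here; the argument is elementary. The only point requiring care is the parity of $\sum_{i=0}^{q-1}\left(s+i\right)$: it is exactly the condition $q\equiv 2\pmod 4$ (equivalently, $q/2$ odd) that forces this sum to be odd for every choice of $s$, and it is worth emphasizing that the hypothesis on the degrees is used precisely to make $\sum_{v}\deg\left(v\right)f\left(v\right)$ even without any information about how $f$ distributes the labels.
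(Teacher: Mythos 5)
Your argument is correct: the double count $\sum_{uv\in E(G)}(f(u)+f(v))=\sum_{v\in V(G)}\deg(v)f(v)$ is even when all degrees are even, while the sum of $q$ consecutive integers with $q\equiv 2\pmod 4$ is $qs+\binom{q}{2}$, which is odd, giving the desired contradiction via Lemma~\ref{trivial}. The paper itself states this lemma without proof, citing \cite{FIM}, and your argument is precisely the standard one given there, so there is nothing to correct.
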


\section{Main results}
We are now ready to state and prove our first result.

\begin{theorem}
\label{main_1}
Let $G$ be a graph of even order $n \geq 6$ with degree sequence $s:4, 2, 2, \ldots, 2$.
Then $G$ is not super edge-magic.
\end{theorem}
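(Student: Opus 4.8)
The plan is to avoid any analysis of the structure of $G$ and instead argue purely from the degree sequence, using the characterization in Lemma~\ref{trivial} together with a double count of the sum of all edge sums. First, since $G$ has one vertex of degree $4$ and $n-1$ vertices of degree $2$, the handshake lemma gives $2|E(G)| = 4 + 2(n-1)$, so $|E(G)| = n+1$.

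Suppose, for contradiction, that $G$ is super edge-magic. By Lemma~\ref{trivial} there is a bijective function $f : V(G) \to \{1, 2, \ldots, n\}$ such that $S = \{f(u) + f(v) : uv \in E(G)\}$ consists of the $n+1$ consecutive integers $s, s+1, \ldots, s+n$, where $s = \min(S)$. Since $|S| = n+1 = |E(G)|$ and the assignment $uv \mapsto f(u)+f(v)$ maps $E(G)$ onto $S$, this assignment is a bijection; in particular the edge sums are pairwise distinct, so that
\[
\sum_{uv \in E(G)} \bigl(f(u) + f(v)\bigr) \;=\; \sum_{i=0}^{n}(s+i) \;=\; (n+1)s + \frac{n(n+1)}{2}.
\]
On the other hand, grouping the terms of this sum vertex by vertex and letting $v_{0}$ denote the unique vertex of degree $4$,
\[
\sum_{uv \in E(G)} \bigl(f(u) + f(v)\bigr) \;=\; \sum_{v \in V(G)} \deg(v)\,f(v) \;=\; 2\sum_{v \in V(G)} f(v) + 2f(v_{0}) \;=\; n(n+1) + 2f(v_{0}).
\]

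Equating the two expressions and simplifying gives $2f(v_{0}) = (n+1)\bigl(s - n/2\bigr)$. Because $n$ is even, $n/2$ is an integer and $n+1$ is odd, so $n+1$ must divide $f(v_{0})$; but $1 \le f(v_{0}) \le n < n+1$, and no such multiple exists. This contradiction shows that $G$ is not super edge-magic.

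I expect the one step that needs care is the observation that a super edge-magic labeling forces all edge sums to be distinct — this is exactly what ``$S$ consists of $|E(G)|$ consecutive integers'' supplies in Lemma~\ref{trivial} — since it is this fact that lets the left-hand sum above be evaluated as an arithmetic progression. The remainder is a one-line parity argument, and the hypothesis that $n$ is even is used precisely to make $n+1$ odd (while $n \ge 6$ is only needed so that such a graph exists). Note that Lemma~\ref{not_SEM} does not apply directly here, since $|E(G)| = n+1$ is odd and hence never $\equiv 2 \pmod 4$.
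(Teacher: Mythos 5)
Your proof is correct and is essentially the same argument as the paper's: both double-count the sum of the edge sums, isolate the extra contribution $2f(v_0)=2\alpha$ from the unique degree-$4$ vertex, and derive the impossible divisibility requirement $(n+1)\mid 2\alpha$ with $\alpha\in[1,n]$, using the evenness of $n$ only to make the complementary term an integer (equivalently, to make $n+1$ odd). The only cosmetic difference is that you route the computation through the vertex-sum set $S$ of Lemma~\ref{trivial}, whereas the paper computes the valence $\text{val}(f)$ directly and shows it fails to be an integer.
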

\begin{proof}
Consider such a graph $G$ with a super edge-magic labeling $f$.
Then
\begin{equation*}
\left\vert E\left( G\right) \right\vert = \sum_{v \in V\left(G\right)} \deg (v) = \frac{4+2\left(n-1\right)}{2}=n+1 \text{.}
\end{equation*} 
Thus, the valence $\text{val}\left(f\right)$ of $f$ can be computed as follows:
\begin{equation*}
\begin{split}
\text{val}\left(f\right)&=\frac{ 2\sum_{i=1}^{n} i +\sum_{i=n+1}^{2n+1} i +2\alpha}{n+1}\\
&=\frac{ \sum_{i=1}^{n} i +\sum_{i=1}^{2n+1} i +2\alpha}{n+1}=\frac{5n^2+7n+\left(2+4\alpha\right)}{2\left(n+1\right)} \text{,}
\end{split}
\end{equation*}
where $\alpha \in \left[1,n\right]$.
We next examine for which valences of $\alpha \in \left[1,n\right]$, $4\alpha$ is a multiple of $2\left(n+1\right)$ or, 
equivalently, $2\alpha$ is a multiple of $n+1$. 
Observe that if $\alpha \in \left[1,n/2\right]$, then we have $2\alpha \leq n <n+1$.
On the other hand, if $\alpha \in \left[n/2+1,n\right]$, then we have 
\begin{equation*}
n+1<n+2\leq 2\alpha \leq 2n <2n+1 \text{.}
\end{equation*}
Consequently, in either case, $2\alpha$ cannot be a multiple of $n+1$.
Now, 
\begin{equation*}
5n^2+7n+\left(2+4\alpha\right)=2\left(n+1\right)\left(\frac{5n}{2}+1\right)+4\alpha
\end{equation*}
so that
\begin{equation*}
\text{val}\left(f\right)=\frac{5n^2+7n+\left(2+4\alpha\right)}{2\left(n+1\right)}=\left(\frac{5n}{2}+1\right)+ \frac{2\alpha}{n+1} \text{.}
\end{equation*}
Note that $5n/2+1$ is a positive integer, since $n$ is an even integer $n \geq 6$. 
However, since $2\alpha$ is not a multiple of $n+1$, it follows from the last equation that $\text{val}\left(f\right)$ is not a positive integer, which is impossible.
\end{proof}

Observe that the preceding result excludes many graphs from being super edge-magic.
In particular, we immediately obtain the following corollary.

\begin{corollary}
\label{main_2}
For every two integers $m \geq 3$ and $n \geq 3$, the graph $C\left(m,n\right)$ is not super edge-magic when $m+n$ is odd.
\end{corollary}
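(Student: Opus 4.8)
The plan is to obtain the corollary as a direct specialization of Theorem \ref{main_1}. First I would record the relevant invariants of $C(m,n)$. Since the two cycles $C_{m}$ and $C_{n}$ share exactly one vertex and no edge, the graph $C(m,n)$ has order $m+n-1$ and size $m+n$. The common vertex lies on both cycles and hence has degree $4$, while every other vertex lies on exactly one of the two cycles and therefore has degree $2$. Consequently the degree sequence of $C(m,n)$ is precisely $s:4,2,2,\ldots,2$, which is the degree sequence hypothesized in Theorem \ref{main_1}.

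Next I would verify the parity and size conditions required by that theorem. If $m+n$ is odd, then the order $m+n-1$ is even. Moreover, $m+n$ odd forces $m$ and $n$ to have opposite parities, so one of them is even and hence, being at least $3$, is in fact at least $4$; together with the other being at least $3$ this gives $m+n\geq 7$, so the order satisfies $m+n-1\geq 6$. Thus, when $m+n$ is odd, $C(m,n)$ is a graph of even order at least $6$ with degree sequence $4,2,2,\ldots,2$, and Theorem \ref{main_1} applies to conclude that $C(m,n)$ is not super edge-magic.

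There is essentially no serious obstacle here; the corollary is just a reinterpretation of Theorem \ref{main_1} for this particular family. The only point that merits a moment's attention is confirming that the order is genuinely at least $6$, so that the lower bound in Theorem \ref{main_1} is met — but this is automatic from the standing assumptions $m\geq 3$, $n\geq 3$ together with $m+n$ being odd, which already rule out the small cases. It is also worth noting, if one wishes to motivate the use of Theorem \ref{main_1}, that Lemma \ref{not_SEM} does not cover this situation: when $m+n$ is odd the size $\left\vert E\left(C(m,n)\right)\right\vert = m+n$ is odd and hence never congruent to $2\pmod{4}$, so the work must be done by Theorem \ref{main_1}.
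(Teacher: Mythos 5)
Your proof is correct and follows exactly the route the paper intends: the corollary is stated there as an immediate consequence of Theorem \ref{main_1}, and you supply precisely the verification (order $m+n-1$ even and at least $6$, degree sequence $4,2,\ldots,2$) that makes this application legitimate. Your closing remark that Lemma \ref{not_SEM} cannot apply here because the size $m+n$ is odd is a nice additional observation, but the argument is essentially the same as the paper's.
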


At this point, a question raises naturally as the next problem indicates.

\begin{problem}
\label{Problem_1}
What can be said about the super edge-magicness of graphs of odd order $n \geq 5$ with degree sequence $s:4, 2, 2, \ldots, 2$?
\end{problem}

A partial solution to Problem \ref{Problem_1} comes directly from Lemma \ref{not_SEM}.
It is clear that a graph of odd order $n$ with degree sequence $s:4, 2, 2, \ldots, 2$ has even size. 
If the size is not only even, but also not divisible by $4$, then it meets the hypothesis of Lemma \ref{not_SEM}, 
and hence it is not super edge-magic. 
Then Problem \ref{Problem_1} becomes as follows.

\begin{problem}
\label{Problem_2}
Let $G$ be a graph of odd order $n \geq 7$ with $\left\vert E\left( G\right) \right\vert \equiv 0\pmod{4}$ and degree sequence $s:4, 2, 2, \ldots, 2$.
What can be said about the super edge-magicness of $G$?
\end{problem}

We know so far that the family of graphs $C\left(m,n\right)$ cannot be super edge-magic except possibly in the case when $m+n \equiv 0\pmod{4}$. 
This leads to propose  the following problem.

\begin{problem}
\label{Problem_3}
What can be said about the super edge-magicness of $C\left(m.n\right)$ when $m+n \equiv 0\pmod{4}$?
\end{problem}

An exhaustive computer search shows that $C\left(3,5\right)$ is not super edge-magic but is this true in general?
It is especially interesting to know whether $C\left(3,4k-3\right)$ is super edge-magic for integers $k \geq 3$.
 
\section{Conclusions}
We have established the super edge-magicness of the family of graphs $C\left(m,n\right)$ when $m+n$ is odd (see Corollary \ref{main_2}), 
and different problems have emerged out of this study. 
In particular, we want to highlight the following problem.

\begin{problem}
\label{Problem_4}
What can be said about the super edge-magicness of graphs that consist of blocks, which are all isomorphic to a cycle?
\end{problem}

We formulate the preceding problem in the most general sense possible, that is, the following concepts were introduced in \cite{LMR1}.
Let $G$ be a graph of order $p$ and size $q$ and $S_{G}$ denote the set
\begin{multline*}
  S_{G} = \Biggl\lbrace
  \frac{\sum_{u \in V\left(G\right)} \text{deg}(u)g(u) + \sum_{i=p+1}^{p+q} i }{q} \text{:}   \\
    \text{ the function } g:V\left(G\right) \rightarrow \left[1,p\right] \text{ is bijective}
  \Biggr\rbrace \text{.}
\end{multline*}
If $\lceil \min S_{G} \rceil \leq \lfloor \max S_{G} \rfloor$, then the \emph{super edge-magic interval} of $G$ is the set
\begin{equation*}
  I_G = [\lceil \min S_{G} \rceil, \lfloor \max S_{G} \rfloor] \cap \mathbb{N} \text{,}
\end{equation*}
where $\mathbb{N}$ denotes the set of natural numbers.
The \emph{super edge-magic set} of $G$ is
\begin{equation*}
  \sigma_{G} = \left\lbrace k \in I_G \right. \text{: there exists a super edge-magic labeling of } G \text{ with valence } k \left. \right\rbrace.
\end{equation*}
L\'{o}pez et al. called a graph $G$ \emph{perfect super edge-magic} if $I_G = \sigma_G$. 

Hence, we would like to encourage researchers to study which graphs, if any, in this family are perfect super edge-magic. 
It is worth to mention to finish this paper that similar ideas to the ones of perfect super edge-magic graphs had already appeared in some other works. 
The interested reader may consult the results in \cite{LMR1} and \cite{LMR2}.

\subsubsection*{$\emph{Acknowledgment}$}
The authors are gratefully indebted to Yukio Takahashi for his continuous encouragement and technical assistance during this work.

\end{document}